\newif\ifTwoColumn
\newif\ifTechReport
\newtheorem{remark}{Remark}
\def\thm@space@setup{%
	\thm@preskip=\parskip \thm@postskip=0pt
}
\newtheorem*{remark}{Remark}
\newtheorem{theorem}{Theorem}[section]
\newtheorem{assumption}[theorem]{Assumption}
\newenvironment{breakablealgorithm}
{
	\begin{center}
		\refstepcounter{algorithm}
		\hrule height.8pt depth0pt \kern2pt
		\renewcommand{\caption}[2][\relax]{
			{\raggedright\textbf{\ALG@name~\thealgorithm} ##2\par}%
			\ifx\relax##1\relax 
			\addcontentsline{loa}{algorithm}{\protect\numberline{\thealgorithm}##2}%
			\else 
			\addcontentsline{loa}{algorithm}{\protect\numberline{\thealgorithm}##1}%
			\fi
			\kern2pt\hrule\kern2pt
		}
	}{
		\kern2pt\hrule\relax
	\end{center}
}
\title{\LARGE \bf A Unified Early Termination Technique for Primal-dual Algorithms in Mixed Integer Conic Programming}
\author{Yuwen Chen$^*$, Catherine Ning, Paul Goulart
\thanks{$^*$Corresponding author: {\tt\footnotesize{yuwen.chen@eng.ox.ac.uk}}. The authors are with the Department of Engineering Science, University of Oxford, UK.}
} 
\begin{document}

\maketitle

\begin{abstract}
We propose an early termination technique for mixed integer conic programming for use within branch-and-bound based solvers. Our approach generalizes previous early termination results for ADMM-based solvers to a broader class of primal-dual algorithms, including both operator splitting methods and interior point methods. The complexity for checking early termination is $O(n)$ for each termination check assuming a bounded problem domain.  We show that this domain restriction can be relaxed for problems whose data satisfies a simple rank condition, in which case each check requires an $O(n^2)$ solve using a linear system that must be factored only once at the root node. We further show how this approach can be used in hybrid model predictive control as long as system inputs are bounded. Numerical results show that our method leads to a moderate reduction in the total iterations required for branch-and-bound conic solvers with interior-point based subsolvers.
\end{abstract}

\section{Introduction}

\subsection{Literature Review}
Mixed integer conic programming (MICP) is a powerful tool for modelling many real-world applications, e.g. hybrid model predictive control \cite{Bemporad99}, portfolio optimization \cite{portfolio}, power electronics \cite{Stellato17} and robust truss topology \cite{Yonekura10}. The branch-and-bound (B\&B) method is the most commonly used technique for the search of an optimal solution in MIP solvers. B\&B algorithms must solve a sequence of relaxed convex subproblems, and the number of such problems increases exponentially w.r.t.\ the number of integer variables. 

Many techniques have been developed to speed up MIP computation. Cutting plane methods are widely used to reduce the problem search space and can reduce significantly the number of nodes that a B\&B must visit. Presolve \cite{Achterberg20} can be regarded as a collection of preprocessing methods before solving a MIP, including bound strengthening, coefficient strengthening, constraint reduction and conflict analysis. In addition to presolving a MIP one also apply many heuristic methods to accelerate the computation. Most acceleration methods can be broadly classified into two types, start and improvement heuristics \cite{Berthold06}, both of which are crucial for pruning nodes in B\&B algorithms. Start heuristics aim to find a feasible solution as early as possible when the B\&B algorithm starts, e.g.\ feasibility pump \cite{Berthold19}. On the other hand, improvement heuristics search for feasible points of better objective value based on information from feasible points already obtained, e.g.\ RINS \cite{Danna05} and the crossover method~\cite{Rothberg07}.

Pruning is usually an effective method to reduce the total number of nodes to be solved in B\&B. Suppose $U$ is the upper bound corresponding to the value of the best integer feasible solution so far. After updating the upper bound $U$ with a new integer feasible point, one can prune any unevaluated nodes that are known to have an optimal value, or any lower bound thereof, greater than $U$. Consequently, if a dual feasible point of a relaxed problem within a B\&B search can be generated prior to convergence with its dual objective already larger than the current upper bound $U$, then one can stop the node computation immediately before solving it to optimality. This is called \textit{early termination} and has been implemented in dual feasible algorithms~\cite{Fletcher98,Naik17,Axehill06,Buchheim16}. 

At the core of any B\&B method is an optimization algorithm for solving convex problems. Many state-of-the-art conic optimization algorithms are primal-dual methods and most of them can be classified into two types: one is a second-order method called interior point method (IPM)~\cite{Nesterov94} and another one is a first-order method called operator splitting method (OSM)~\cite{Eckstein92}. Both of them start from an infeasible initial point, and attain a feasible point when the algorithm converges to a global optimum or generate a certificate of infeasibility otherwise. This makes early termination difficult since primal-dual methods do not typically reach a dual feasible point until the algorithm converges at optimality. Recently, \cite{Liang21} proposes a heuristic method generating a dual feasible point for a specialized primal-dual IPM, but the feasibility of dual iterates is still not theoretically guaranteed and it applies only to mixed-integer quadratic programming.

\subsection{Contributions and Organization}
In this paper we generalize an early termination strategy for mixed integer conic programming, initially proposed for ADMM~\cite{Chen22}, to any primal-dual optimization method. We develop efficient methods to find a dual feasible point for early termination at each iteration. We relax the boundedness assumption in~\cite{Chen22} to a more general rank condition on the problem data which is applicable for many real-world scenarios. We propose a simple correction step that costs $O(n)$ flops for bounded problems, and a more general optimization-based one costing $O(n^2)$ flops at each iteration once we obtain a factorization at the start of a MICP. Both costs are relatively small compared to the factorization time $O(n^3)$ per iteration in IPMs and no worse than the per iteration cost of OSMs.

Section 2 provides background on conic optimization. Section 3 presents our early termination strategy for mixed integer conic programming and describes how to implement it in both OSMs and IPMs. Numerical results are shown in Section 4 and conclusions summarized in Section 5.

\subsection{Notation}
We denote the $n \times n$ symmetric matrices by $\mathbb{S}^n$, and the set of positive semidefinite matrices $\mathbb{S}^n_+$. We denote $|\mathbb{I}|$ as the number of elements in the discrete set $\mathbb{I}$. The norm $\|\cdot \|$ is the Euclidean norm. The projection $\Pi_{\mathcal{C}}(x)$ denotes projecting $x \in \mathbb{R}^n$ onto set $\mathcal{C}$. The support function of $\mathcal{C}$ is 
\begin{align*}
	\mathcal{\sigma}_{\mathcal{C}}(x) := \sup_{y \in \mathcal{C}} \langle x, y \rangle.
\end{align*}
We denote the dual cone $\mathcal{K}^*$ and polar cone $K^{\circ}$ of a convex cone $\mathcal{K}$ by
	$
	\mathcal{K}^* := \{ y \in \mathbb{R}^n \ | \ \sup_{x \in \mathcal{K}} \langle x, y \rangle \ge 0 \},
$
and 
	$
	\mathcal{K}^{\circ} := \{ y \in \mathbb{R}^n \ | \ \sup_{x \in \mathcal{K}} \langle x, y \rangle \le 0 \},
	$
	respectively.

\section{Background}
\subsection{Problem Formulation}
We will consider MICPs in the general form:
\begin{align}
	\begin{aligned}
		\min \quad & \frac{1}{2} x^\top P x + q^\top x\\
		\text{s.t.} \quad & Gx = h\\
		& Ax + s = b,  \ s \in \mathcal{K}, \\
		& \bar{l} \le x \le \bar{u}, \ x_{\mathbb{I}} \in \mathcal{Z},
	\end{aligned}\label{MICP-form}
\end{align}
where $G \in \mathbb{R}^{p \times n}, A \in \mathbb{R}^{m \times n}, h \in \mathbb{R}^{p}, b \in \mathbb{R}^{m}$ and $\mathcal{K}$ is a proper cone. The vector $x \in \mathbb{R}^n$ is the decision variable with interval bounds defined by $\bar{l},\bar{u} \in \mathbb{R}^n$, and $\mathbb{I}$ denotes the entries of $x$ constrained to a finite integer set $\mathcal{Z}$. The objective function is convex quadratic with symmetric positive semidefinite $P \in \mathbb{S}_+^n$ and vector $q \in \mathbb{R}^n$. We denote the continuous relaxation of \eqref{MICP-form} as
\begin{align}
	\begin{aligned}
		\min \quad & \frac{1}{2} x^\top P x + q^\top x\\
		\text{s.t.} \quad & Gx = h\\
		& Ax + s = b,  \ s \in \mathcal{K}, \\
		& l \le x \le u,
	\end{aligned}\qquad \text{CP}(l, u) \label{MICP-relaxation}
\end{align}
where the integer relaxation of $\mathcal{Z}$ is incorporated into the box constraint $\bar{l} \le l \le x \le u \le \bar{u}$. 

\subsection{Dual Form for Operator Splitting Methods}
Following~\cite{Chen22}, the dual of the continuous relaxation~\eqref{MICP-relaxation} is
\begin{align}
	\begin{aligned}
		\max_{x,y, y_b,z} & -\frac{1}{2}x^\top P x - h^\top z + b^\top y - \mathcal{\sigma}_{[l,u]}(y_b) \\
		\text{s.t} \quad & Px + q + G^\top z - A^\top y + y_b = 0, \\
		& x \in \mathbb{R}^n, y \in \mathcal{K}^{\circ}, y_b \in \mathbb{R}^n, z \in \mathbb{R}^p, \label{ADMM-dual}
	\end{aligned}
\end{align}
where the support function $\mathcal{\sigma}_{[l,u]}(y_b)$ is explicit, i.e.
\begin{align}
	\begin{aligned}
		\mathcal{\sigma}_{[l,u]}(y_b) = u^\top y_b^+ + l^\top y_b^- \label{box-support},
	\end{aligned}
\end{align}
 where $y_b^+ = \max \{y_b,0\}, \quad y_b^- = \min \{y_b, 0 \},$, which is suitable to generate a correction for early termination of any MIP based on an operator splitting solver, e.g. OSQP~\cite{Stellato18} and PDHG~\cite{Applegate21}. 
 
 \subsection{Dual Form for Primal-Dual Interior-Point Methods}
 
 For IPMs that rely on logarithmically homogeneous self-concordant barrier (LHSCB) functions~\cite{Nesterov94}, there is no standard explicit barrier function for box constraints. We instead reformulate the box constraint $l \le x \le u$ into two nonnegative inequalities $x \ge l, x \le u$ that have well-defined barrier functions, and obtain another dual formulation as:
\begin{align}
	\begin{aligned}
		\max_{x,y, y_+, y_-,z} & -\frac{1}{2}x^\top P x - h^\top z - b^\top y - u^\top y_+ + l^\top y_- \\
		\text{s.t} \quad & Px + q + G^\top z + A^\top y + y_+ - y_- = 0 \\
		&x \in \mathbb{R}^n, y \in \mathcal{K}^*, \ y_- \ge 0, y_+ \ge 0, z \in \mathbb{R}^p, \label{IPM-dual}
	\end{aligned}
\end{align}
where $\mathcal{K}^* = -\mathcal{K}^{\circ}$ for a proper cone $\mathcal{K}$. If we define $y_b := y_+ - y_-$ for~\eqref{IPM-dual}, then we find that the dual form for IPMs~\eqref{IPM-dual} is the same as its counterpart~\eqref{ADMM-dual} for OSMs. We can therefore design a unified dual correction mechanism for both IPMs and OSMs, which we describe in Section~\ref{section:correction}. 

The primal-dual IPM  typically requires factorization of a matrix in the~form
\begin{align}
	\begin{aligned}
		K := \left[\begin{array}{cccc}
			P & G^\top & A^\top \\
			G & 0 & 0\\
			A & 0 & -H^k \\
		\end{array}\right] 
	\end{aligned} \label{IPM-KKT}
\end{align}
to compute the search direction for every iteration $k$, where $H^k$ is the scaling matrix depending on the choice of cones but it is always positive semidefinite. By adding small perturbation to diagonals of $K$, the matrix can become quasi-definite and be factorized by LDL decomposition with complexity $O((n+p+m)^3)$~\cite{ECOS,Clarabel}. It can always generate a sequence $(x^k, s^k, z^k, y^k, y_b^k)$ such that $s^k \in \mathcal{K}$ and $y^k \in \mathcal{K}^*$, which is the same as in OSMs.

\subsection{Branch and Bound}
The B\&B method computes the optimal solution $x$ in~\eqref{MICP-form} by exploring different integer combinations in a tree. It repeatedly branches on some entries of $x$ in the integer index set $\mathbb{I}$ and solves the continuous relaxation \eqref{MICP-relaxation} until a global optimizer is found. Meanwhile, B\&B always maintain a globally upper bound $U$, which corresponds to the best integer feasible solution of~\eqref{MICP-form} found so far. The upper bound is very useful to prune unsolved nodes and save computation. We would exploit it in our early termination strategy later. 

\section{Early Termination for Primal-dual Algorithms}\label{section:correction}
In this section we first review the early termination technique we proposed in~\cite{Chen22}, arguing that it is also applicable in other OSMs, and then tailor it for primal-dual IPMs. We also relax our boundedness assumption and improve the optimization-based correction discussed in~\cite{Chen22} and discuss how we can apply it into hybrid MPC problems. 

The key for our proposed early termination is to utilize the current dual iterate which has a conic feasible $y^k$ from a primal-dual algorithm, either an OSM or an IPM, and then remove linear dual residuals by adding compensation to unconstrained dual variables. We thereby obtain a dual feasible solution for~\eqref{ADMM-dual} or~\eqref{IPM-dual} and generate the corresponding dual cost for early termination. To ensure our early termination always works, we make the following boundedness assumption as in~\cite{Chen22}.   
\begin{assumption}
	The domain of $x$ in the MIP relaxation~\eqref{MICP-relaxation} is bounded, i.e. $l, u \in \mathbb{R}^n$ are both finite. \label{assumption-1}
\end{assumption}
The assumption is applicable for many real world scenarios, e.g. $x$ is an 0-1 switching signal or subjected to some physical limitations, like in some QP problems where $\|x\|$ is bounded.  We will show how to relax this assumption in Section~\ref{subsection:optimization-based-correction-partial}.
\subsection{Correction for OSMs} \label{subsection-correction-splitting}
ADMM can generate iterates $y^k \in \mathcal{K}^{\circ}$, $\forall k \ge 0$ in~\cite{Garstka21}. For any dual iterates $(x^k,y^k,y_b^k,z^k)$ generated by ADMM, we can offset the residual
\begin{align}
	r^k := Px^k + q + G^\top z^k - A^\top y^k + y_b^k
\end{align}
by setting $\Delta y_b^k = -r^k$ so that $(x^k,y^k,y_b^k + \Delta y_b^k,z^k)$ is a dual feasible point for~\eqref{ADMM-dual}, which is suitable for the early termination technique proposed in~\cite{Chen22}. A useful property of ADMM is that it always generates a $y^k$ satisfying conic constraints $\mathcal{K}^{\circ}$. However, such a property can be generalized to any OSM because we always tackle a conic constraint $s \in \mathcal{K}$ by either the projection to the polar cone $\mathcal{K}^{\circ}$, i.e. $\Pi_{\mathcal{K}^{\circ}}(v^k)$, or the projection to $\mathcal{K}$, i.e. $\Pi_{\mathcal{K}}(v^k)$. The former is what we want for early termination directly, like Google's primal-dual hybrid gradient (PDHG) solver~\cite{Applegate21}. For the latter, Due to the Moreau decomposition [Section 2.5~\cite{Parikh14}], \begin{align}
	v = \Pi_{\mathcal{K}}(v) + \Pi_{\mathcal{K}^{\circ}}(v), \ \forall v,
\end{align}
we can generate an "equivalent" dual iterate $(I - \Pi_{\mathcal{K}})(v) \in \mathcal{K}^{\circ}$, that is $y^k$ we obtained in ADMM~\cite{Chen22}. Therefore, we claim the early termination we proposed in~\cite{Chen22} can be implemented for any OSM within a B\&B solver.

\subsection{Correction for primal-dual IPMs}\label{subsection-correction-IPM}
Note that the main idea behind our correction strategy is to make the iterate $(x^k,y^k,y_+^k,y_-^k,z^k)$ dual feasible. A similar idea can be applied to primal-dual IPMs, which also generate dual-feasible conic iterates $y^k$ for every iteration $k$. Suppose we define $\Delta y_b := \Delta y_{+} - \Delta y_{-}$ with $\Delta y_{+}, \Delta y_{-} \ge 0$ for the IPM dual formulation~\eqref{IPM-dual}. We can verify $\Delta y_b$ is an unconstrained variable for the dual correction. If we only make corrections on $y_-$ and $y_+$, leaving other variables fixed, then the change of dual cost in~\eqref{IPM-dual} becomes
\begin{align}
\begin{aligned}
	-\Delta y_+^\top u +\Delta y_-^\top l & = \Delta y_+^\top(l-u) + (\Delta y_- - \Delta y_+)^\top l\\
	& = \Delta y_+^\top(l-u) - \Delta y_b^\top l.
\end{aligned} \label{box-correction}
\end{align}
Note that we have $ \Delta y_+^\top(l-u) \le 0$ due to $ \Delta y_+ \ge 0$, $l-u \le 0$. Meanwhile, the linear residual is
\begin{align}
	r^k := Px^k + q + G^\top z^k + A^\top y^k + y_+^k - y_-^k \label{linear-residual}
\end{align} 
before the correction. To maximize the dual objective in~\eqref{IPM-dual} given $\Delta y_b^k = -r^k$, we set $\Delta y_+^k, \Delta y_-^k$ as
\begin{align}
	\Delta y_+^k = \max \{0, \Delta y_b^k\}, \quad \Delta y_-^k = \Delta y_+^k - \Delta y_b^k.\label{correction-delta-y}
\end{align}
Hence, $(x^k,y^k,y_+^k + \Delta y_+^k,y_-^k + \Delta y_-^k, z^k)$ is a dual feasible point and we can enable early termination checking via~\eqref{IPM-dual}.

\subsection{Optimization-based correction}\label{subsection:optimization-based-correction-partial}
In Section~\ref{subsection-correction-splitting} and~\ref{subsection-correction-IPM}, we need the correction for each entry of $y_+^k,y_-^k$ ensuring dual feasibility, and that is why Assumption~\ref{assumption-1} comes into play. However, the core of early termination is to offset the linear residual $r^k$ in~\eqref{linear-residual} via corrections on unconstrained dual variables, which means we can exploit other dual variables beyond box constraints. Suppose we are going to utilize unconstrained dual variables $x, y_b, z$ for correction in early termination, Assumption~\ref{assumption-1} can be generalized to the assumption below.
\begin{assumption}
	$[P, I_{\mathcal{B}}^\top, G^\top]$ is of rank $n$, i.e. full row-rank, where $\mathcal{B}$ is the set of entries that have explicit bounded constraints $l_{\mathcal{B}} \le x_{\mathcal{B}} \le u_{\mathcal{B}}$ and $I_{\mathcal{B}}$ is the incidence matrix from the span of $x$ to entries in $\mathcal{B}$, i.e. $x_{\mathcal{B}} = I_{\mathcal{B}}x$.\label{assumption-2}
\end{assumption}  
Under Assumption~\ref{assumption-2}, we can always generate a dual feasible correction $(\Delta x^k, \Delta y_b^k, \Delta z^k)$ since the following linear system always has a solution,
\begin{align}
	P \Delta x^k + I_{\mathcal{B}}^\top \Delta y_{\mathcal{B}}^k + G^\top \Delta z^k = -r^k. \label{generalized-linear-solve}
\end{align}
It is also a generalization for setting $\Delta y_b^k = -r^k$ discussed in Section~\ref{subsection-correction-IPM}, which is useful if some entries of $l,u$ for box constraints are infinite or the difference $u-l$ is so large that the corrected dual cost is excessively sensitive to the correction $\Delta y_b^k$. 

Due to the existence of different coefficients for the support function $\sigma_{[l,u]}(y_b)$ in~\eqref{ADMM-dual} or $- u^\top y_+ + l^\top y_-$ in~\eqref{IPM-dual}, we divide the optimization-based correction into two steps. For the first step, we solve the optimization problem
\begin{align}
	\begin{aligned}
		\min_{\Delta x^k, \Delta z^k, \Delta y_{\mathcal{B}}^k} & \frac{1}{2}\Delta x^{k \top} P\Delta x^k + (Px^k)^\top \Delta x^k + h^\top \Delta z^k \\
		& \quad  + \frac{\eta}{2}\|\Delta y_{\mathcal{B}}^k\|^2 + \frac{\gamma}{2}\|\Delta z^k\|^2 \\
		\text{s.t.} \quad & P \Delta x^k + I_{\mathcal{B}}^\top \Delta y_{\mathcal{B}}^k + G^\top \Delta z^k = -r^k,
	\end{aligned}\label{correction-opt-partial}
\end{align}
which produces a correction $(\Delta x^k,\Delta y_{\mathcal{B}}^k,\Delta z^k)$ while maximizing the corrected dual cost w.r.t. $\Delta x^k, \Delta z^k$ with regularizations for $\Delta y_{\mathcal{B}}^k, \Delta z^k$. The corresponding KKT condition of~\eqref{correction-opt-partial} is
\begin{align}
	\begin{bmatrix}
		P &  I_{\mathcal{B}}^\top & G^\top \\
		I_{\mathcal{B}} & -\eta I & 0 \\
		G & 0 & -\gamma I
	\end{bmatrix}
	\begin{bmatrix}
		\Delta x^k\\
		\Delta y_{\mathcal{B}}^k\\
		\Delta z^k
	\end{bmatrix}
	= \begin{bmatrix}
		-r^k\\
		-I_{\mathcal{B}} x^k\\
		h - Gx^k
	\end{bmatrix}\label{correction-partial-linear-system}
\end{align}
if we set $\lambda^k = x^k + \Delta x^k$. The matrix on the left-hand side does not depend on the active node, and hence only needs to be factored once at the initialization of a MIP solver and can be reused later for any node's computation. Meanwhile, solving~\eqref{correction-partial-linear-system} is computationally efficient compared to the factorization step of an IPM in every iteration (compare \eqref{IPM-KKT}), or not worse than the computation of an OSM per iteration. For the second step, we complete $\Delta y_b^k$ by setting $\Delta y_j = 0$ for any index $j \notin \mathcal{B}$. If an IPM is used, we compute $\Delta y_+^k, \Delta y_-^k$ via $\Delta y_b^k$ as what we have shown in Section~\ref{subsection-correction-IPM}, and $(x^k + \Delta x^k,y^k,y_+^k + \Delta y_+^k,y_-^k + \Delta y_-^k, z^k + \Delta z^k)$ is a dual feasible point for early termination.
\subsection{Applications in Control}\label{subsection:MPC-application}
A common type of MIP arising in control engineering is optimal control with discrete-valued inputs as encountered in hybrid MPC problems, which takes the form:
\begin{align}
	\begin{aligned}
		\min \ & \sum_{t=0}^{T-1} (x_t^\top Q_t x_t + u_t^\top R_t u_t) + x_T^\top Q_T x_T + 2q_T^\top x_T\\
		\text{s.t.} \quad & x_{t+1} = \bar{A}x_t + \bar{B}u_t, \ x_0= x_{init}, \\
		&  u_t \in \mathcal{U}_t, \quad \forall t = 0,1, \dots,T-1,
	\end{aligned}\label{model-hmpc}
\end{align}
where $x_{init} \in \mathbb{R}^{n_x}$ is the initial state and system dynamics is $x_{t+1} = \bar{A}x_t + \bar{B}u_t$ with constraints $\mathcal{U}_t$ for each input $u_t \in \mathbb{R}^{n_u}$.  $\mathcal{U}_t$ can be composed wholly or in part by discrete valued constraints.
Our optimization-based correction is suitable for the hybrid-MPC~\eqref{model-hmpc} as due to the next theorem.
\begin{theorem}
	The optimization-based correction is applicable to the hybrid MPC~\eqref{model-hmpc} when $\mathcal{U}_t$ is bounded for $t=0,1,\dots,T-1$.
\end{theorem}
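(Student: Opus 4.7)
The plan is to cast the hybrid MPC \eqref{model-hmpc} as an instance of the canonical MICP \eqref{MICP-form} and then verify Assumption~\ref{assumption-2}, after which the optimization-based correction of Section~\ref{subsection:optimization-based-correction-partial} applies verbatim. Concretely, I would collect the decision variables into $w = (x_0^\top, x_1^\top, \ldots, x_T^\top, u_0^\top, \ldots, u_{T-1}^\top)^\top$, stack the initial-condition row $x_0 = x_{init}$ together with the $T$ dynamic equalities $x_{t+1} - \bar{A} x_t - \bar{B} u_t = 0$ into the matrix $G$, and build $P$ as the block-diagonal matrix of the $Q_t$'s and $R_t$'s.

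Because each $\mathcal{U}_t$ is bounded, I can always enclose it in a finite axis-aligned box $\underline{u}_t \le u_t \le \overline{u}_t$ and append these inequalities as \emph{explicit} interval bounds in the MICP; this is redundant with $u_t \in \mathcal{U}_t$ and therefore alters neither the feasible set nor the optimum of \eqref{model-hmpc}. The important consequence is that every input entry now lies in the bounded-variable index set $\mathcal{B}$, while the states $x_0,\ldots,x_T$ need not.

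With this setup in hand, verifying Assumption~\ref{assumption-2} reduces to showing that any $w \in \mathbb{R}^n$ with $w^\top [P, I_{\mathcal{B}}^\top, G^\top] = 0$ must vanish. Dropping the $P$ block entirely and using only $I_{\mathcal{B}} w = 0$ and $G w = 0$, the bound block forces $u_t = 0$ for every $t$, the initial-condition row of $G$ forces $x_0 = 0$, and the remaining dynamic rows collapse to $x_{t+1} = \bar{A} x_t$, so a trivial induction gives $x_t = 0$ for all $t$. Hence $w = 0$, and in fact $[I_{\mathcal{B}}^\top, G^\top]$ on its own already has row-rank $n$, which in turn implies the same for $[P, I_{\mathcal{B}}^\top, G^\top]$.

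The one step I expect to require extra care is the treatment of $\mathcal{U}_t$ when it is not literally a box (for example, a finite discrete set of admissible inputs): the argument hinges on being able to read off an enclosing box from the hypothesis. I would flag explicitly that boundedness of $\mathcal{U}_t$ is precisely what makes this enclosure available, and once the enclosing box is added the rank argument becomes purely structural in the MPC data and independent of $\bar{A}$, $\bar{B}$, $Q_t$, $R_t$, and $q_T$.
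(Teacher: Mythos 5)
Your proposal is correct and takes essentially the same route as the paper: both reduce the theorem to verifying Assumption~\ref{assumption-2} for the stacked variable $[x_0;\dots;x_T;u_0;\dots;u_{T-1}]$ and establish full row rank of $[P, I_{\mathcal{B}}^\top, G^\top]$ using only the $I_{\mathcal{B}}$ and $G$ blocks, the paper by reordering columns into an upper-triangular matrix with $\pm 1$ diagonal and you by the equivalent left-null-space induction ($I_{\mathcal{B}}w=0$ kills the inputs, the initial-condition row kills $x_0$, and the dynamics rows propagate $x_t=0$). Your explicit remark that boundedness of $\mathcal{U}_t$ is what licenses enclosing the inputs in a finite box so that they enter $\mathcal{B}$ is a point the paper leaves implicit, and is a welcome clarification rather than a deviation.
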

\begin{proof}
\noindent Suppose $x:=[x_0;\dots;x_T;u_0; \dots;u_{T-1}]$. The corresponding block components of $[P, I_{\mathcal{B}}^\top, G^\top]$ become
\begin{subequations}
	\small
	\begin{align*}
		P = \begin{bmatrix}
			Q & \\
			& R
		\end{bmatrix},
		I_{\mathcal{B}} = \begin{bmatrix}
			0_{n_uT \times n_x(T+1)}  & I_{n_uT}
		\end{bmatrix},  	
	\end{align*} 
	\begin{small}
		\begin{align*}
			G = \begin{bmatrix}
				\begin{matrix}
					I & & & & & \\
					\bar{A} &-I & & & & \\
					& \hphantom{-}\bar{A} & -I& & & \\
					& & \ddots& \ddots& & \\
					& & & &  \hphantom{-}\bar{A}& -I
				\end{matrix}
				~~\vline \quad 
				\begin{matrix}
					0 & & & \\
					\bar{B} & & &\\
					& \ddots& & \\
					& & \ddots& \\
					& & & \bar{B}
				\end{matrix}
			\end{bmatrix},
		\end{align*}
	\end{small}
\end{subequations}
\noindent where $Q = \text{diag}(Q_1, \dots, Q_T), R = \text{diag}(R_1, \dots,R_T)$ are block diagonal. Hence, $[P, I_{\mathcal{B}}^\top, G^\top]$ can be reordered as an upper triangular matrix that looks like
\begin{align*}
\begin{bmatrix}
\begin{matrix}
	I_{n_x}& & \hphantom{-}\bar{A}^\top& & & & \\
	& I_{n_u} & \hphantom{-}\bar{B}^\top& & & &  \\
	& & -I_{n_x}& & & &\\
	& & & & \!\!\!\!\!\!\ddots &  & \hphantom{-}\bar{A}^\top\\
	& & & &  & I_{n_u}& \hphantom{-}\bar{B}^\top\\
	& & & &  & & -I_{n_x} 
\end{matrix} ~~\vline \quad 
\begin{matrix}
	\vdots & \vdots \\
	\vdots & \vdots \\
	\vdots & \vdots \\
	\vdots & \vdots 
\end{matrix}
\end{bmatrix}.
\end{align*}	 
The matrix above is full row rank since the diagonal term is either $1$ or $-1$. Hence, $[P, I_{\mathcal{B}}^\top, G^\top]$ is full row rank and the Assumption~\ref{assumption-2} is satisfied.
\end{proof}
Note that the system~\eqref{correction-partial-linear-system} is also banded for the sparse formulation~\eqref{model-hmpc} and we can exploit its structure to accelerate the computation as in~\cite{Rao98}, which reduces the cost per iteration from $\mathcal{O}((m_xN)^3)$ to $\mathcal{O}(N(n_x+n_u)^3)$.

\section{Algorithm and Complexity of Computation}
We next summarize how to implement early termination in a B\&B, which corresponds to steps $3$-$17$ in Algorithm~\ref{alg_BB}. For every iteration $k$ in a node CP$(\underline{x},\bar{x})$, we can obtain a primal-dual iterate $(x^k,s^k,y^k,y_b^k,z^k)$ from an OSM or an IPM with an approximate dual cost $D^k$. Note that this iterate is conic feasible but doesn't satisfy the dual linear constraint, i.e.~\eqref{ADMM-dual} or~\eqref{IPM-dual}. We then check whether the algorithm finds an optimal solution $\hat{x}$ or detects the infeasibility of CP$(\underline{x},\bar{x})$ (steps $5$-$10$). These steps are inherent to a primal-dual algorithm even without early termination and do not incur any additional time cost. We then activate early termination when we find the approximate dual cost is larger than the current upper bound, i.e $D^k \ge U$ (step $11$). This heuristic follows~\cite{Liang21} since $D^k$ is close to the optimal solution of CP$(\underline{x},\bar{x})$ when the dual linear residual $r^k$ is small enough, and can save computation time on early termination. 

Once early termination is enabled, we then compute a feasible correction $(\Delta x^k, \Delta y_b^k,\Delta z^k)$ using either of the methods discussed in Section~\ref{subsection-correction-splitting} to \ref{subsection:optimization-based-correction-partial} and compute the dual cost $\underline{D}^k$ at the dual feasible point $(x^k + \Delta x^k,s^k,y^k,y_b^k + \Delta y_b^k,z^k + \Delta z^k)$ (step $12$). If $\underline{D}^k$ is greater than $U$, we know the optimum of CP$(\underline{x},\bar{x})$ is larger than $\underline{D}^k$ due to weak duality, and hence larger than $U$, which indicates that we can stop the node computation and prune this node immediately. Otherwise, we continue computing until we solve CP$(\underline{x}, \bar{x})$ and then proceeds with the standard B\&B ,method (steps $18$-$27$). 
\begin{breakablealgorithm}
	\caption{B\&B for MICP with early termination}
	\label{alg_BB}
	\begin{algorithmic}[1]
		\REQUIRE ~~\\  
		Initialize upper bound $U \gets + \infty$, node tree $\mathcal{T} \gets \text{CP}(l,u)$ \\[7pt]
		\WHILE {$\mathcal{T} \ne \emptyset$}
		\STATE Pick and remove CP$(\underline{x}, \bar{x})$ from $\mathcal{T}$ 
		\FOR {k = 1, 2 \dots}
		\STATE Generate $(x^k,s^k,y^k,y_b^k,z^k)$ and an estimated dual cost $D^k$ from OSMs or IPMs
		\IF {\textit{termination criteria is satisfied}}
		\STATE return optimal solution $\hat{x} = x^{k}$ and $f(\hat{x})$
		\ENDIF
		\IF {\textit{infeasibility of} \text{CP}$(\underline{x}, \bar{x})$ \textit{is detected}}
		\STATE return CP$(\underline{x}, \bar{x})$ infeasible 
		\ENDIF
		\IF {$D^k \ge U$}
			\STATE Compute the corrected dual cost $\underline{D}^k$ via $(x^k + \Delta x^k,y^k,y_b^k + \Delta y_b^k,z^k + \Delta z^k)$
			\IF {$\underline{D}^k \ge U$}
			\STATE return CP$(\underline{x}, \bar{x})$ terminates early
			\ENDIF
		\ENDIF 
		\ENDFOR 
		\IF {CP$(\underline{x}, \bar{x})$ \textit{terminates early} or is \textit{infeasible}}
		\STATE prune current node
		\ELSIF {$f(\hat{x}) > U$}
		\STATE prune current node
		\ELSIF {$\hat{x}$ is \textit{integer feasible}}
		\STATE $U \gets f(\hat{x}), x^* \gets \hat{x}$
		\STATE prune nodes in $\mathcal{T}$ with lower bound $> U$
		\ELSE
		\STATE branch node CP$(\underline{x}, \bar{x})$
		\ENDIF
		\ENDWHILE
	\end{algorithmic}
\end{breakablealgorithm}

Suppose we already have a dual cost $D^k$ based on the iterate $(x^k,y^k,y_b^k,z^k)$ from a primal-dual algorithm. In that case the correction~\eqref{box-correction} only takes extra $O(n)$ flops to generate a feasible dual cost. For an optimization-based correction~\eqref{correction-opt-partial}, we need no more than $O((2n+p)^2)$ flops to solve the linear system~\eqref{correction-partial-linear-system} if we save the factorization of the matrix in~\eqref{correction-partial-linear-system} from the start of a MICP. Both correction flops are relatively small compared to $O(n+p+m)^3$ flops per IPM iteration. For OSM, we check early termination along with the termination check, which is usually repeated every $M=25$ iterations. Each early termination check is no more costly than the original computation in one iteration, so that its computational time is negligible inside every $M$ iterations. 

\section{Numerical Results}
We implement Algorithm~\ref{alg_BB} and a counterpart without early termination, i.e. removing steps $3$-$17$ in Algorithm~\ref{alg_BB}. Both were written in Julia with every convex relaxation solved by the IPM solver Clarabel~\cite{Clarabel}. Tests are implemented on Intel Core i7-9700 CPU @3.00GHz, 16GB RAM.

\subsection{Mixed Integer Model Predictive Control}
We consider a hybrid MPC for current reference tracking from~\cite{Stellato17}, which can be formulated as a MIQP
\begin{align}
\begin{aligned}
	\min \quad & \sum_{t=0}^{T} \gamma^t l(x_t) + \gamma^{T} V(x_T) \\
	\text{s.t.} \quad & x_0 = x_{\text{init}}, \\
	& x_{t+1} = \bar{A}x_t + \bar{B}u_t, \\
	& ||u_t - u_{t-1}||_{\infty} \le 1, \ u_t \in \{-1,0,1\}^6,
\end{aligned}\label{model-hmpc-experiment}
\end{align}
where $\gamma$ is a discount factor and $T$ is the time horizon. The quadratic state penalty cost $l(x_t)$ is for current tracking and $V(x_T)$ is a final stage cost using approximate dynamic programming. The initial state is $x_{\text{init}}$ and the system dynamics is $x_{t+1} = \bar{A}x_t + \bar{B}u_t$ with $x_t \in \mathbb{R}^{12}$ representing the internal motor currents, voltages and the input $u_t \in \mathbb{R}^{6}$ including three semiconductor devices positions with integer values $\{-1,0,1\}$ and three additional binary components required to model the system. The ramp rate constraint $\|u_t - u_{t-1}\|_{\infty} \le 1$ avoids shoot-through in the inverter positions  (changes from $-1$ to $1$ or vice-versa) that can damage the components. 

By eliminating $x_t, t \in \{1, \dots, T\}$ via the state dynamics, problem \eqref{model-hmpc-experiment} reduces to a problem depending only on input variables $u_0, \dots, u_{T-1}$ and the initial state $x_0$; we refer readers to~\cite{Stellato17} for details. We set $T=8$ for the time horizon and simulate closed-loop MIMPC for $100$ consecutive intervals. Figure~\ref{fig:mpcn=8-denseMPC} compares the performance of B\&B with and without early termination. We run the test with both cold-start and warm-start to initialize the solver variables using the solution from the parent subproblem in the B\&B tree. We take the simple early termination introduced in Section~\ref{subsection-correction-IPM}. Since a valid upper bound $U$ is required for early termination, we start to count IPM iterations only when the first feasible solution of~\eqref{model-hmpc-experiment} is found. Here, we define one loop of steps 3-17 in Algorithm \ref{alg_BB} as an IPM iteration. For all 100 intervals, early termination has produced a noticeable reduction in IPM iterations, averaging to about $25\%$. Since the simple early termination only takes additional $O(n)$ flops compared to the factorization with $O(n^3)$ flops per iteration, the ratio of reduction of total iteration numbers is a good proxy for the ratio of solve time reduction we can achieve in IPMs when a simple early termination is implemented. 
\begin{figure}[ht]
	\centering
	\vspace{-4mm}
	\includegraphics[width=0.9\linewidth]{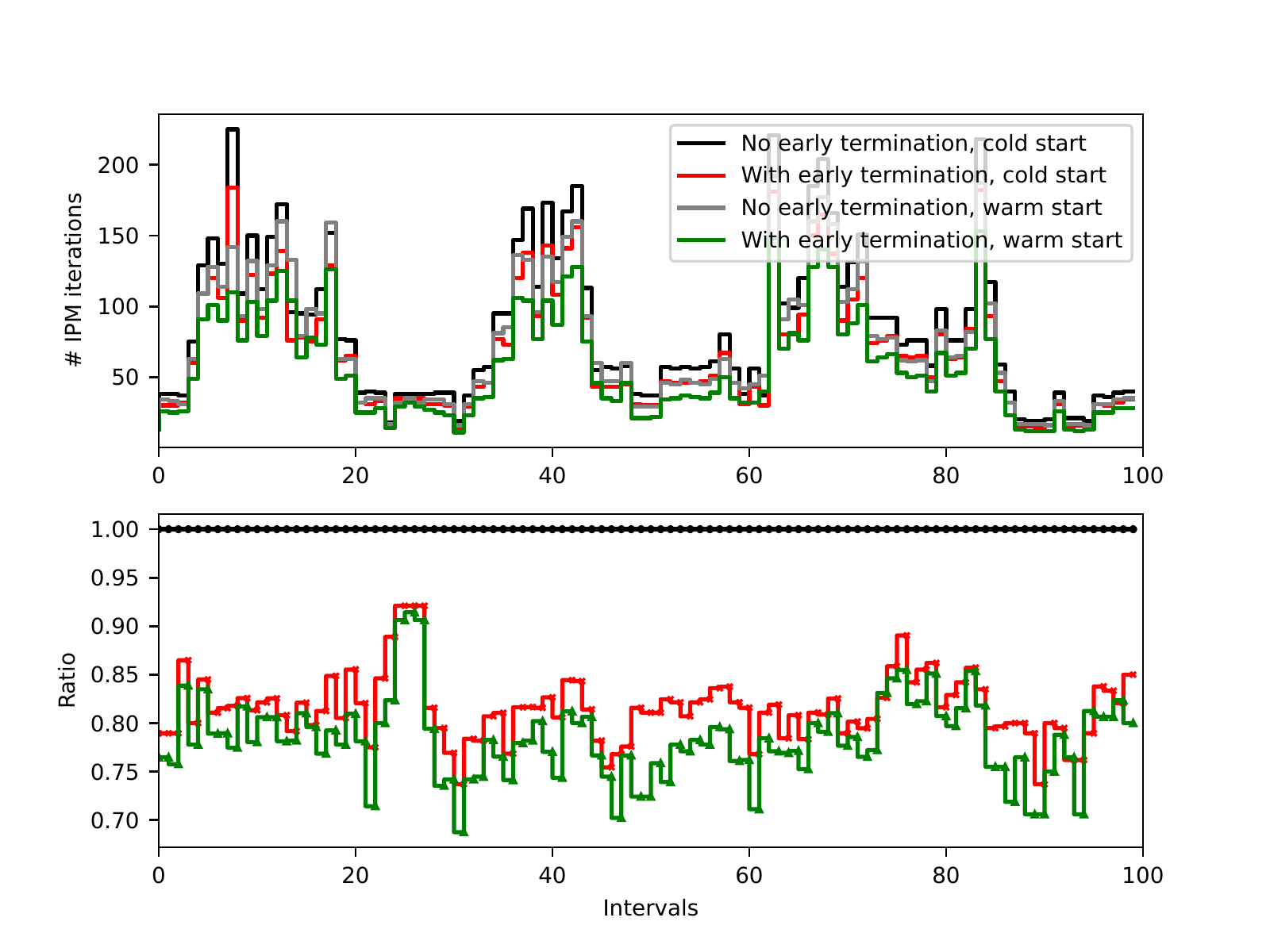}
	\caption{\footnotesize{MIMPC $T = 8$, reduced dense form}}
	\label{fig:mpcn=8-denseMPC}
	\vspace{-3mm}
\end{figure}

We then implement another experiment for~\eqref{model-hmpc-experiment} but use the non-reduced sparse form as discussed in Section~\ref{subsection:MPC-application} with the optimization-based correction of $\eta = \gamma = 1$. The optimization-based correction takes an additional $O(n^2)$ flops compared to the factorization with $O(n^3)$ flops per iteration, so the ratio of reduction of total iteration numbers remains a good proxy for the ratio of solve time reduction we can achieve in IPMs when a simple early termination is implemented. Although early termination seems to be somewhat less effective in the non-reduced form of MPC relative to the dense form, it arises from the fact that we have fewer IPM iterations left to go once the first feasible upper bound $U$ is found, which is shown in Figure~\ref{fig:mpcn=8-denseMPC} and Figure~\ref{fig:mpcn=8-sparseMPC}.
\begin{figure}[ht]
	\centering
	\vspace{-4mm}
	\includegraphics[width=0.9\linewidth]{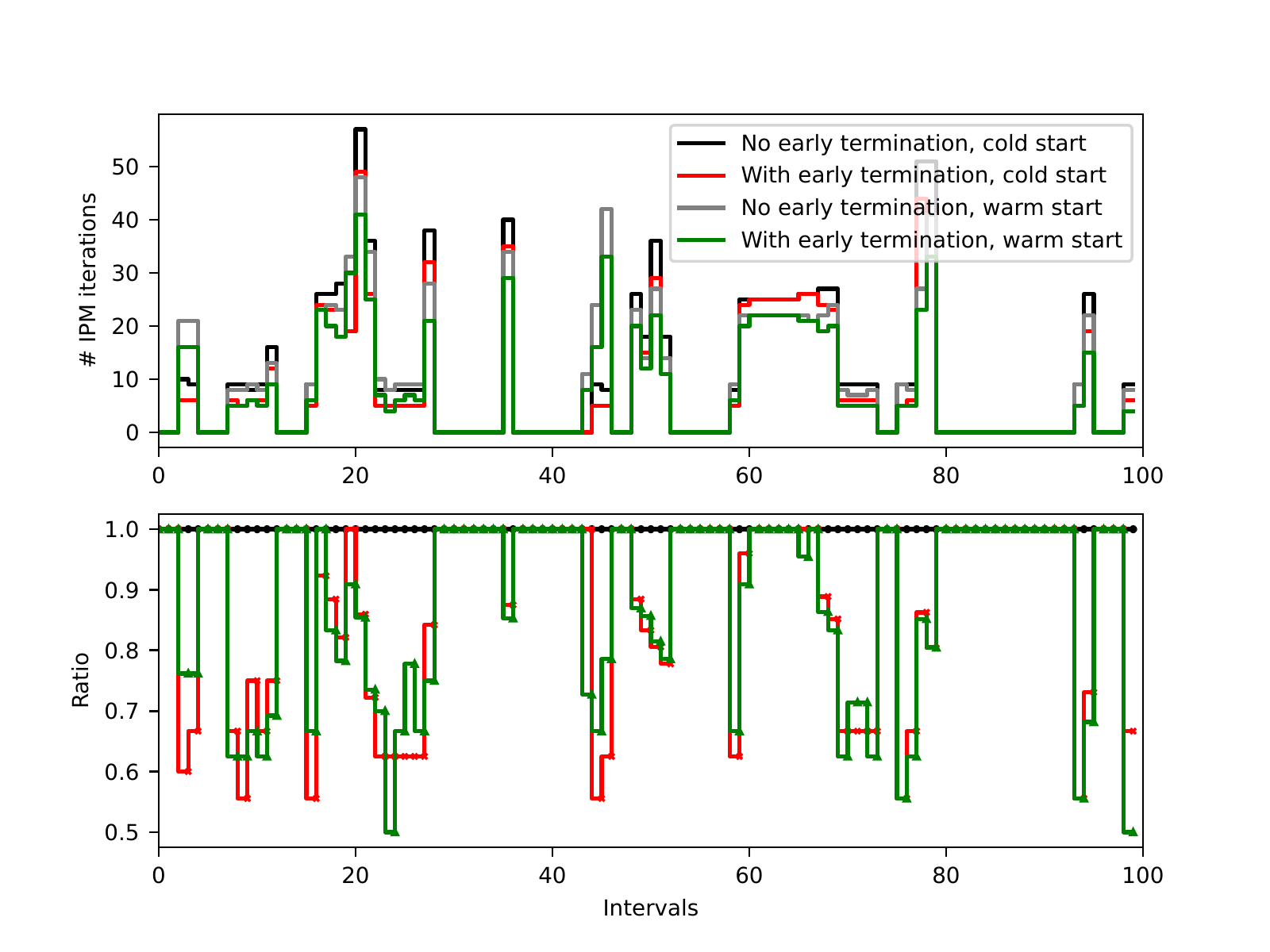}
	\caption{\footnotesize{MIMPC $T = 8$, non-reduced sparse form}}
	\label{fig:mpcn=8-sparseMPC}
	\vspace{-3mm}
\end{figure}

\subsection{Portfolio Optimization}
We also test our proposed early termination technique on a portfolio optimization, which can be formulated as a mixed integer second-order cone (SOC) programming~\cite{portfolio},
\begin{align*}
	\begin{aligned}
		\min \quad & r^\top (x^{+}-x^{-}) \\
		\text{s.t.} \quad & (x^{+}-x^{-})^{\top}\Lambda(x^{+}-x^{-}) \leq \rho, \\
		& \sum\nolimits^{n}_{i=1}(x^{+}-x^{-})=1, \quad \sum\nolimits^{n}_{i=1}b_i \leq K,\\
		&L_{min}\leq \sum\nolimits^{n}_{i=1}l_i \leq L_{max}, \  b \leq Hl, \ l \leq H^{\top}b,\\
		& l_j \in \{0,1\}, \text{ for } j\in  \{1,...,L\}\\
		& 0 \leq x_i^+ \leq b_i, 0 \leq x_i^- \leq b_i, b_i \in \{0,1\}, i \in \{1,...,n\}.
	\end{aligned}  
\end{align*}
There are $n$ assets in total, categorized into $L$ industry sectors, with the mapping from assets to sectors captured by matrix $H \in \mathbb{R}^{n \times L}$. We define $x=(x^{+}; x^{-}) \in \mathbb{R}^{2n}$ as the fractions of portfolio value held in each asset: $x^{+}$ and $x^{-}$ denote buying and selling (i.e.\ shorting) respectively, and both are non-negative and must sum up to unity. $r \in \mathbb{R}^{n}$ is the expected return for $n$ assets, and $\Lambda$ is the covariance for market volatility and restricted below a certain level $\rho$ and formulated as a SOC constraint. The binary vectors $b\mathbb{R}^{n}$ and $l \in \mathbb{R}^L$ denote whether we invest in an asset, respectively in a sector or not. The number of assets we can invest in is upper-bounded by $K$ and the number of sectors is box-constrained by $L_{min}$ and $L_{max}$ to ensure asset diversity. 

We use the early termination strategy as in Section~\ref{subsection-correction-IPM} and choose $n = 20, L=3$, $T=2000, L_{min}=1, L_{max}=L, \rho=100, K = 10$. Figure~\ref{fig:portfolio-optimization} shows the early termination can reduce about $10\%$-$15\%$ of IPM iterations after we find the first integer feasible solution, which proves that our early termination remains effective for general MICPs.

\section{Conclusion}
We generalized our early termination technique of ADMM in~\cite{Chen22} to state-of-the-art primal-dual algorithms in MICPs. We showed how to utilize existing dual iterates inside either an OSM or an IPM to generate a dual feasible point for early termination with little additional efforts, and we provided a sufficient condition when we can find a dual feasible point in the proposed early termination technique. Numerical results showed the proposed early termination can reduce the total number of iterations in MICPs effectively.
\begin{figure}[ht]
	\centering
	\vspace{-4mm}
	\includegraphics[width=0.9\linewidth]{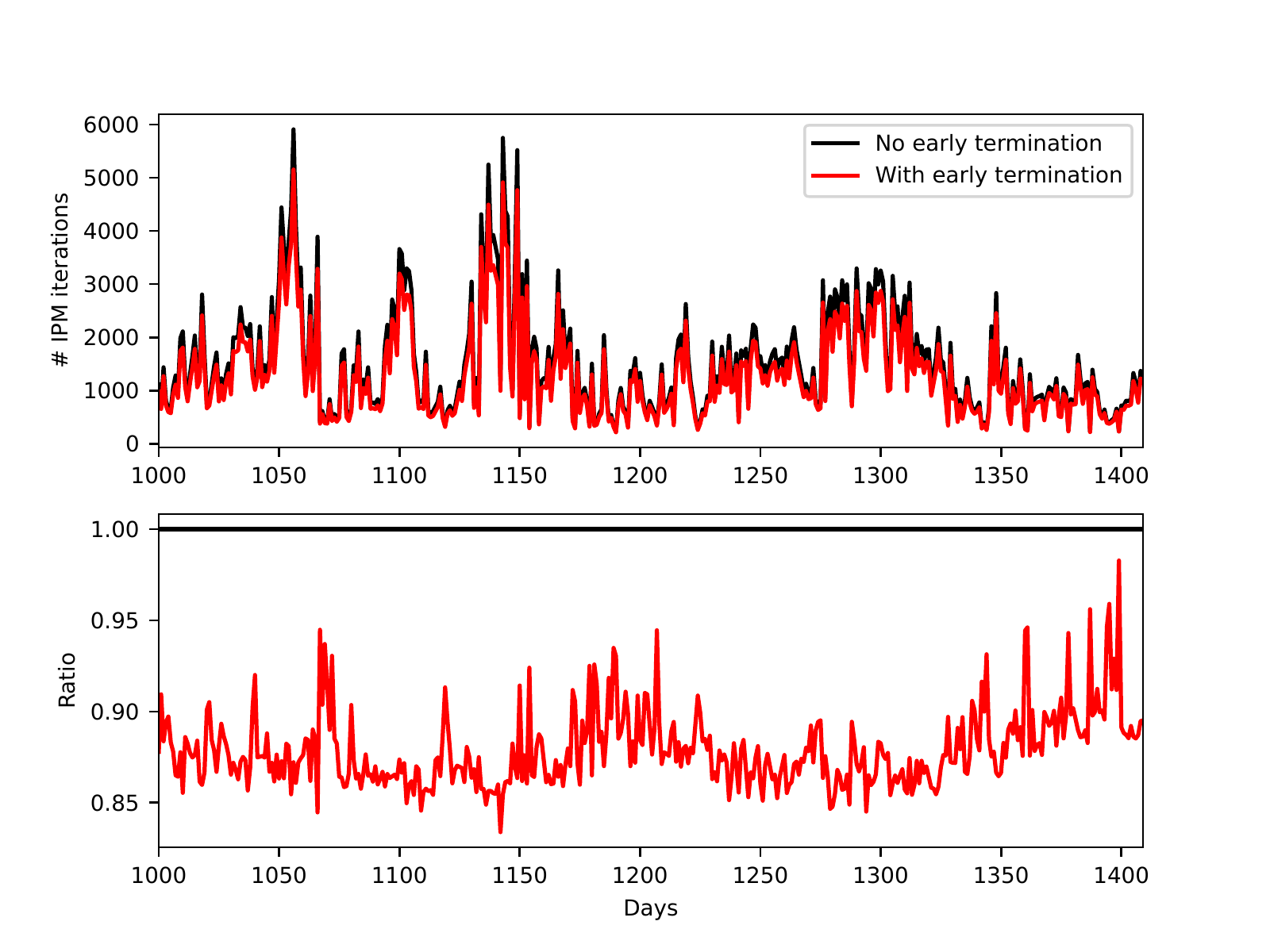}
	\caption{\footnotesize{Portfolio Optimization}}
	\label{fig:portfolio-optimization}
	\vspace{-3mm}
\end{figure}

\normalem
\bibliographystyle{IEEEtran}
\bibliography{reference}

\begin{thebibliography}{10}
\providecommand{\url}[1]{#1}
\csname url@samestyle\endcsname
\providecommand{\newblock}{\relax}
\providecommand{\bibinfo}[2]{#2}
\providecommand{\BIBentrySTDinterwordspacing}{\spaceskip=0pt\relax}
\providecommand{\BIBentryALTinterwordstretchfactor}{4}
\providecommand{\BIBentryALTinterwordspacing}{\spaceskip=\fontdimen2\font plus
\BIBentryALTinterwordstretchfactor\fontdimen3\font minus
  \fontdimen4\font\relax}
\providecommand{\BIBforeignlanguage}[2]{{%
\expandafter\ifx\csname l@#1\endcsname\relax
\typeout{** WARNING: IEEEtran.bst: No hyphenation pattern has been}%
\typeout{** loaded for the language `#1'. Using the pattern for}%
\typeout{** the default language instead.}%
\else
\language=\csname l@#1\endcsname
\fi
#2}}
\providecommand{\BIBdecl}{\relax}
\BIBdecl

\bibitem{Bemporad99}
A.~Bemporad and M.~Morari, ``Control of systems integrating logic, dynamics,
  and constraints,'' \emph{Automatica}, vol.~35, no.~3, pp. 407--427, 1999.

\bibitem{portfolio}
H.~Y. Benson and Ümit Sa{\u g}lam, \emph{Mixed-Integer Second-Order Cone
  Programming: A Survey}.\hskip 1em plus 0.5em minus 0.4em\relax {INFORMS},
  2014, pp. 13--36.

\bibitem{Stellato17}
B.~Stellato, T.~Geyer, and P.~J. Goulart, ``High-speed finite control set model
  predictive control for power electronics,'' \emph{IEEE Transactions on Power
  Electronics}, vol.~32, no.~5, pp. 4007--4020, 2017.

\bibitem{Yonekura10}
K.~Yonekura and Y.~Kanno, ``Global optimization of robust truss topology via
  mixed integer semidefinite programming,'' \emph{Optimization and
  Engineering}, vol.~11, no.~3, pp. 355--379, 2010.

\bibitem{Achterberg20}
T.~Achterberg, R.~E. Bixby, Z.~Gu, E.~Rothberg, and D.~Weninger, ``Presolve
  reductions in mixed integer programming,'' \emph{INFORMS Journal on
  Computing}, vol.~32, no.~2, p. 473–506, 2020.

\bibitem{Berthold06}
T.~Berthold, ``Primal heuristics for mixed integer programs,'' Ph.D.
  dissertation, Technische Universität Berlin, 2006.

\bibitem{Berthold19}
T.~Berthold, A.~Lodi, and D.~Salvagnin, ``Ten years of feasibility pump, and
  counting,'' \emph{EURO Journal on Computational Optimization}, vol.~7, no.~1,
  pp. 1--14, 2019.

\bibitem{Danna05}
E.~Danna, E.~Rothberg, and C.~L. Pape, ``Exploring relaxation induced
  neighborhoods to improve mip solutions,'' \emph{Mathematical Programming},
  vol. 102, no.~1, pp. 71--90, 2005.

\bibitem{Rothberg07}
E.~Rothberg, ``An evolutionary algorithm for polishing mixed integer
  programming solutions,'' \emph{INFORMS Journal on Computing}, vol.~19, no.~4,
  pp. 534--541, 2007.

\bibitem{Fletcher98}
R.~Fletcher and S.~Leyffer, ``Numerical experience with lower bounds for miqp
  branch-and-bound,'' \emph{SIAM Journal on Optimization}, vol.~8, no.~2, p.
  604–616, 1998.

\bibitem{Naik17}
V.~V. Naik and A.~Bemporad, ``Embedded mixed-integer quadratic optimization
  using accelerated dual gradient projection,'' \emph{IFAC-PapersOnLine},
  vol.~50, no.~1, pp. 10\,723--10\,728, 2017, 20th IFAC World Congress.

\bibitem{Axehill06}
D.~{Axehill} and A.~{Hansson}, ``A mixed integer dual quadratic programming
  algorithm tailored for mpc,'' in \emph{Proceedings of the 45th IEEE
  Conference on Decision and Control}, 2006, pp. 5693--5698.

\bibitem{Buchheim16}
C.~Buchheim, M.~D. Santis, S.~Lucidi, F.~Rinaldi, and L.~Trieu, ``A feasible
  active set method with reoptimization for convex quadratic mixed-integer
  programming,'' \emph{SIAM Journal on Optimization}, vol.~26, no.~3, pp.
  1695--1714, 2016.

\bibitem{Nesterov94}
Y.~Nesterov and A.~Nemirovskii, \emph{Interior-Point Polynomial Algorithms in
  Convex Programming}.\hskip 1em plus 0.5em minus 0.4em\relax Society for
  Industrial and Applied Mathematics, 1994.

\bibitem{Eckstein92}
J.~Eckstein and D.~P. Bertsekas, ``On the douglas{-}rachford splitting method
  and the proximal point algorithm for maximal monotone operators,''
  \emph{Mathematical Programming}, vol.~55, no.~1, pp. 293--318, 1992.

\bibitem{Liang21}
J.~{Liang}, S.~D. {Cairano}, and R.~{Quirynen}, ``Early termination of convex
  qp solvers in mixed-integer programming for real-time decision making,''
  \emph{IEEE Control Systems Letters}, vol.~5, no.~4, pp. 1417--1422, 2021.

\bibitem{Chen22}
Y.~Chen and P.~Goulart, ``An early termination technique for {ADMM} in mixed
  integer conic programming,'' in \emph{2022 European Control Conference
  ({ECC})}, 2022, pp. 60--65.

\bibitem{Stellato18}
B.~{Stellato}, V.~V. {Naik}, A.~{Bemporad}, P.~{Goulart}, and S.~{Boyd},
  ``Embedded mixed-integer quadratic optimization using the osqp solver,'' in
  \emph{2018 European Control Conference ({ECC})}, 2018, pp. 1536--1541.

\bibitem{Applegate21}
D.~Applegate, M.~Díaz, O.~Hinder, H.~Lu, M.~Lubin, B.~O'Donoghue, and
  W.~Schudy, ``Practical large-scale linear programming using primal-dual
  hybrid gradient,'' 2021.

\bibitem{ECOS}
A.~Domahidi, E.~Chu, and S.~Boyd, ``{ECOS}: An {SOCP} solver for embedded
  systems,'' in \emph{2013 European Control Conference ({ECC})}.\hskip 1em plus
  0.5em minus 0.4em\relax IEEE, 2013, pp. 3071--3076.

\bibitem{Clarabel}
\BIBentryALTinterwordspacing
P.~Goulart and Y.~Chen. (2022) Clarabel.jl. [Online]. Available:
  \url{https://oxfordcontrol.github.io/ClarabelDocs/stable/}
\BIBentrySTDinterwordspacing

\bibitem{Garstka21}
M.~Garstka, M.~Cannon, and P.~Goulart, ``Cosmo: A conic operator splitting
  method for convex conic problems,'' \emph{Journal of Optimization Theory and
  Applications}, vol. 190, no.~3, pp. 779--810, 2021.

\bibitem{Parikh14}
N.~Parikh and S.~Boyd, ``Proximal algorithms,'' \emph{Found. Trends Optim.},
  vol.~1, no.~3, p. 127–239, 2014.

\bibitem{Rao98}
C.~V. Rao, S.~J. Wright, and J.~B. Rawlings, ``Application of interior-point
  methods to model predictive control,'' \emph{Journal of Optimization Theory
  and Applications}, vol.~99, no.~3, pp. 723--757, 1998.

\end{thebibliography}

\end{document}